\numberwithin{equation}{section}
\newtheoremstyle{customDef}
  {2.0\topsep}   
  {\topsep}   
  {\normalfont}  
  {0pt}       
  {\bfseries} 
  {.}         
  {5pt plus 1pt minus 1pt} 
  {}          
\newtheoremstyle{customThm}
{2.0\topsep}  
{\topsep}     
{\itshape}     
{0pt}         
{\bfseries}   
{.}           
{5pt plus 1pt minus 1pt} 
{}            
\theoremstyle{customDef}
\newtheorem*{dfn*}{Definition}
\newtheorem*{corol*}{Corollary}
\theoremstyle{customThm}
\newtheorem{thm}{Theorem}[section]
\newtheorem*{thm*}{Theorem}
\newtheorem{lem}{Lemma}[section]
\newenvironment{enumerate*}
{	\begin{enumerate} 
	\setlength{\itemsep}{0pt}
}
{\end{enumerate}}
\newcommand{\norm}[1]{\ifthenelse{\isempty{#1}}{\left\| \cdot \right\|}{\left\| #1 \right\|}}
\newcommand{\R}{\mathbb{R}}
\newcommand{\twiddle}{\sim}
\renewcommand{\hat}[1]{\widehat{#1}}
\newcommand\blfootnote[1]{%
  \begingroup
  \renewcommand\thefootnote{}\footnote{#1}%
  \addtocounter{footnote}{-1}%
  \endgroup
}
\begin{document}

\title{Clouds in Higher Dimensions}
\author{Samuel Desrochers}
\maketitle

\begin{abstract}
Following some work done by Komjáth \cite{ThreeClouds} and Schmerl \cite{ConversenClouds}, we extend the definition of a cloud to $\R^N$ for $N \geq 2$ and show that $k$ clouds cover $\R^2$ if and only if $k$ clouds cover $\R^N$. We also show that countably many clouds cover $\R^N$.
\end{abstract}

\section{Introduction and Main Results}
\label{sect:intro}

In this paper, we are concerned with certain subsets of Euclidean space called clouds. These sets were first described by Komjáth \cite{ThreeClouds}, who gave the following definition:

\begin{quote}
If $a$ is a point on the plane, then a \textit{cloud around $a$} is a set $A$ which intersects every line $e$ with $a \in e$ in a finite set.
\end{quote}

Komjáth's main interest was determining the number of clouds needed to cover the plane. He showed that three clouds cover the plane if and only if the continuum hypothesis holds. He was also able to generalize this statement and show that if $2^{\aleph_0} \leq \aleph_n$, then $n+2$ clouds cover the plane ($n \geq 1$). However, whether the converse of this statement was true remained open; this was only resolved later by Schmerl \cite{ConversenClouds}. In fact, joining their results yields the following, slightly stronger, statement.

\begin{thm}[Komjáth, Schmerl]
\label{TFAE}
Let $n \geq 1$. The following are equivalent:
\begin{enumerate*}
\item $2^{\aleph_0} \leq \aleph_n$
\item $n+2$ clouds cover $\R^2$
\item For any $n+2$ distinct noncollinear points in $\R^2$, there are clouds centered at these points which cover $\R^2$
\end{enumerate*}
\end{thm}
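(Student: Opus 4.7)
I plan to prove the equivalence cyclically: $(3) \Rightarrow (2) \Rightarrow (1) \Rightarrow (3)$. The implication $(3) \Rightarrow (2)$ is immediate, since any $n+2$ noncollinear points in $\R^2$ may serve as centers in (3), which then yields the cloud cover required by (2).

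For $(1) \Rightarrow (3)$ --- Komjáth's construction, extended to arbitrary prescribed noncollinear centers --- I fix $a_1, \ldots, a_{n+2}$ and use $|\R^2| \leq \aleph_n$ to enumerate $\R^2 \setminus \{a_1, \ldots, a_{n+2}\}$ in type $\omega_n$. I would build the clouds by transfinite recursion, coloring each $x_\alpha$ with one of $n+2$ colors and setting $A_i = \{a_i\} \cup c^{-1}(i)$. At stage $\alpha$ the rule is to pick a color $i$ such that the line through $a_i$ and $x_\alpha$ is not already ``saturated'' by previously $i$-colored points. The central claim is that at each stage at least one color remains available, which should follow by a pigeonhole argument using $|\alpha| \leq \aleph_{n-1}$ together with the noncollinearity of the $a_i$. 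Choosing the correct notion of saturation --- so that the limiting clouds actually satisfy the finite-intersection condition on every line through each center --- is the main technical step of this direction.

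The principal obstacle is $(2) \Rightarrow (1)$, Schmerl's converse. Arguing contrapositively, suppose $\aleph_{n+1} \leq 2^{\aleph_0}$ along with a hypothetical cloud cover $(A_i)_{i=1}^{n+2}$ of $\R^2$ with centers $a_i$; the goal is a contradiction. The plan is to attach to each point $x \in \R^2$ a combinatorial invariant recording, for each $i$, how $x$ sits within $A_i$ relative to the line $\overline{a_i x}$ (for instance, a finite tuple of $A_i$-points lying on that line), and then argue that the assignment is injective with image of cardinality at most $\aleph_n$. Noncollinearity of the $a_i$ should make the invariant strong enough to separate points, while the cloud condition should restrict its image; the $n+2$ centers are meant to contribute independent finiteness constraints whose combined effect bounds $|\R^2|$ by $\aleph_n$. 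Isolating the correct invariant --- one that both distinguishes points and has suitably bounded range --- is the crux, and I expect this to be the main hurdle, since the weaker statement (for $n+3$ clouds) has a much shorter argument and the sharpness at $n+2$ is exactly what makes Schmerl's work delicate.
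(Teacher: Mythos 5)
This theorem is not proved in the paper at all: it is quoted as a known result of Komjáth and Schmerl, and the paper only reproves (a higher-dimensional generalization of) the implication $(2)\Rightarrow(1)$ in Section \ref{sect:card}. So your proposal has to be judged as a free-standing proof sketch, and as such it has genuine gaps in both substantive directions, which you yourself flag but do not close. For $(1)\Rightarrow(3)$, the greedy scheme ``at stage $\alpha$ pick a color $i$ whose line through $a_i$ and $x_\alpha$ is not yet saturated'' does not work with any fixed notion of saturation: nothing prevents the enumeration from reaching a point $x_\alpha$ all $n+2$ of whose lines $\overline{a_i x_\alpha}$ already carry arbitrarily many previously $i$-colored points, so the ``central claim'' that a color always remains available is simply false as stated. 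The known argument is not greedy; it filters $\R^2$ into an increasing $\omega_n$-chain of subsets of smaller cardinality that are \emph{closed} under intersecting lines through the centers, and derives finiteness from a rank argument (closure forces that at most one of the $n+2$ lines through a point of rank $\alpha$ can meet lower-rank points), combined with an induction on $n$. That closure-and-rank idea is the missing ingredient, not a pigeonhole on $|\alpha|\leq\aleph_{n-1}$.

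For $(2)\Rightarrow(1)$ the gap is more serious, because the proposed mechanism (an injective invariant on $\R^2$ with range of size $\aleph_n$) is not how the sharp result is obtained, and there is no indication it can be made to work: the finiteness of $C_i$ on lines through $a_i$ does not by itself produce a small injective fingerprint of a point. Schmerl's actual proof --- which this paper adapts in Theorem \ref{thm:schmerlAdapt} --- routes through Kuratowski's decomposition theorem: one lifts the cloud cover to $\R^{n+2}$ via a linear map sending $e_i\mapsto p_i$, applies a projective collineation of $P_{n+2}(\R)$ moving the centers to points at infinity so that lines through $p_i$ become lines parallel to the $i$-th coordinate axis, and then invokes Kuratowski's characterization of $|X|\leq\aleph_n$ by coverings of $X^{n+2}$ finite on axis-parallel lines. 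All of the set-theoretic content (and the sharpness at $n+2$ that you correctly identify as delicate) lives in Kuratowski's theorem, whose proof is an induction on $n$ rather than a direct injection. Without that reduction, or an equally strong substitute, your plan for this direction does not constitute a proof.
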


In this paper, we are interested in extending the notion of a cloud to higher-dimensional Euclidean spaces, and determining the number of clouds required to cover these spaces. Firstly, adapting the definition of a cloud to higher dimensions is easy: indeed, there is almost nothing to change.

\begin{dfn*}
Let $N \geq 2$ and $a \in \R^N$. A \textbf{cloud around $a$} is a subset $C \subset \R^N$ such that for every line $L$ passing through $a$, $L \cap C$ is finite. More generally, we say $C \subset \R^N$ is a \textbf{cloud} if it is a cloud around $a$ for some $a \in \R^N$.
\end{dfn*}

We now wish to determine the number of clouds needed to cover $\R^N$ for any $N \geq 2$. In particular, our main result consists in establishing the relationship between the number of clouds needed to cover $\R^2$ and the number of clouds needed to cover $\R^N$ for any $N$. Without assuming anything about the size of the continuum, we obtain the following result.

\begin{thm}
\label{thm:2iffN}
Let $N \geq 2, \; n \geq 3$. Then $n$ clouds cover $\R^2$ if and only if $n$ clouds cover $\R^N$.
\end{thm}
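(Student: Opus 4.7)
The plan is to prove the two directions separately. The forward direction admits a clean product construction, while the reverse direction is the main challenge.

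\textbf{Forward direction.} Suppose $C_1, \ldots, C_n$ are clouds in $\R^2$ around $a_1, \ldots, a_n$ with $\bigcup_i C_i = \R^2$. Finite modifications preserve both the cloud property and the covering, so since $n \geq 3$ one may arrange $a_i \notin C_i$ for each $i$. Define the cylinders $\hat{C}_i := C_i \times \R^{N-2} \subset \R^N$ with centers $\hat{a}_i := (a_i, 0)$. For a line through $\hat{a}_i$ with direction $(v, w) \in \R^2 \oplus \R^{N-2}$: if $v \neq 0$, its intersection with $\hat{C}_i$ is parameterized by $\{t \in \R : a_i + tv \in C_i\}$, finite by the cloud property of $C_i$; if $v = 0$, the line equals $\{a_i\} \times \R w$, which misses $\hat{C}_i$ because $a_i \notin C_i$. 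Hence each $\hat{C}_i$ is a cloud and $\bigcup_i \hat{C}_i = \R^N$.

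\textbf{Reverse direction.} Let $C_1, \ldots, C_n$ be clouds in $\R^N$ around $a_1, \ldots, a_n$ covering $\R^N$. The clean case is when the centers lie on a common $2$-plane $P$: then $D_i := C_i \cap P$ is a cloud in $P$ about $a_i$ (any line in $P$ through $a_i$ is also a line in $\R^N$ through $a_i$), and the $D_i$ cover $P \cong \R^2$. This immediately handles $n = 3$ and, more generally, any configuration whose centers have affine span of dimension at most $2$.

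For the general case the naive restriction fails, because $C_i \cap P$ has no reason to be a cloud when $a_i \notin P$. The strategy is to replace the original covering with another $n$-cloud covering of $\R^N$ whose centers \emph{are} coplanar, and then apply the clean case. I expect this center-relocation step to be the central difficulty. One route is to prove an $\R^N$ analog of clause (iii) of Theorem \ref{TFAE}: whenever $n$ clouds cover $\R^N$, a covering also exists with centers at any prescribed sufficiently generic $n$-tuple, in particular at any coplanar $n$-tuple. This flexibility might be obtained by combining the forward direction above with Theorem \ref{TFAE}(iii) for $\R^2$, then adjusting the resulting cover via a transfinite-recursion argument that reassigns points between clouds. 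Alternatively, one may bypass center-relocation entirely by adapting Schmerl's argument for Theorem \ref{TFAE}(ii$\Rightarrow$i) to $\R^N$, deriving the cardinality bound $2^{\aleph_0} \leq \aleph_{n-2}$ directly from the $\R^N$ covering and then invoking Theorem \ref{TFAE} back in $\R^2$.
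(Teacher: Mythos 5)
Your forward direction is complete and is exactly the paper's argument: the cylinder $C_i \times \R^{N-2}$ is the paper's Lemma \ref{lem:cloudExtension}, including the same care about arranging $a_i \notin C_i$ so that lines with direction $(0,w)$ miss the cylinder. Your observation that intersecting with a $2$-plane containing all the centers settles the reverse direction whenever the centers are coplanar is a nice elementary point the paper does not isolate; in particular it disposes of $n=3$ with no set theory at all.

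The reverse direction for $n \geq 4$ with non-coplanar centers, however, is a genuine gap: you correctly identify the two candidate strategies but execute neither, and the one the paper actually uses is where essentially all of the work lies. Your first route (relocate the centers to a coplanar configuration, then use the clean case) is circular as described: invoking clause (iii) of Theorem \ref{TFAE} to get a covering with prescribed centers requires first knowing $2^{\aleph_0} \leq \aleph_{n-2}$, which by that same theorem is equivalent to ``$n$ clouds cover $\R^2$'' --- the statement you are trying to prove. The ``transfinite-recursion reassignment'' that would make this route non-circular is not sketched and is not obviously available. Your second route is the paper's actual proof (Theorem \ref{thm:schmerlAdapt}): given $n$ clouds covering $\R^N$ with centers $p_1,\dots,p_n$, one pulls the clouds back to $\R^n$ along a linear map sending $e_i \mapsto p_i$, embeds into the projective space $P_n(\R)$, applies a projective collineation sending each $E(e_i)$ to the point at infinity of the $i$th axis (so that lines through the centers become lines parallel to coordinate axes), and then applies Kuratowski's characterization of $|X| \leq \aleph_{n-2}$ to a small box $X^{n}$ chosen so that the transformed sets stay in the affine chart and the linear map is injective on the relevant lines. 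Naming this adaptation is not the same as carrying it out: the choice of the interval $X$, the verification that $S^{-1}(E(X^{n}))$ lands in the image of $E$, and the injectivity of $T$ on lines through $e_i$ and the pulled-back box are precisely the technical content of the paper's Section \ref{sect:card}, and none of it appears in your proposal.
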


An immediate corollary of this theorem is:

\begin{corol*}
Let $N,M \geq 2$, $n \geq 3$. Then $n$ clouds cover $\R^N$ if and only if $n$ clouds cover $\R^M$.
\end{corol*}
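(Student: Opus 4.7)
The plan is to derive this corollary directly from Theorem \ref{thm:2iffN} by using it twice, with $\R^2$ serving as a common intermediary between $\R^N$ and $\R^M$. Concretely, since $N \geq 2$ and $n \geq 3$, Theorem \ref{thm:2iffN} gives the equivalence that $n$ clouds cover $\R^N$ if and only if $n$ clouds cover $\R^2$. Applying the same theorem with $M$ in place of $N$, we get that $n$ clouds cover $\R^2$ if and only if $n$ clouds cover $\R^M$. Chaining these two equivalences yields the desired statement.

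There is essentially no obstacle here: the content is entirely contained in Theorem \ref{thm:2iffN}, and the corollary just observes that an equivalence with a fixed reference space ($\R^2$) propagates to an equivalence between any two spaces in the family. The only thing worth noting is why the two separate applications of the theorem are both legal, namely that the hypotheses $N \geq 2$, $M \geq 2$, and $n \geq 3$ are exactly what the theorem requires in each direction; no further case analysis or dimension-specific argument is needed. The proof can therefore be written in a single line after invoking Theorem \ref{thm:2iffN} twice.
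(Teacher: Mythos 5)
Your proposal is correct and matches the paper's intent exactly: the corollary is stated there as an immediate consequence of Theorem \ref{thm:2iffN}, obtained by applying it once to $\R^N$ and once to $\R^M$ with $\R^2$ as the common intermediary. Nothing further is needed.
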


More interesting is the way in which we obtain a proof of this result. To obtain a covering of $\R^N$ from a covering of $\R^2$, all we need is a straightforward geometric argument. However, to prove the reverse implication, we must adapt Schmerl's argument to higher dimensions, so that a covering of $\R^N$ implies the continuum hypothesis. Then theorem \ref{TFAE} indirectly gives the implication we need. To make all of this more explicit, we claim that the following extension of theorem \ref{TFAE} is true.

\begin{thm}
\label{NewTFAE}
Let $n \geq 1$ and $N \geq 2$. The following are equivalent:
\begin{enumerate*}
\item \label{NewTFAE:CH} 
$2^{\aleph_0} \leq \aleph_n$
\item \label{NewTFAE:2} 
$n+2$ clouds cover $\R^2$
\item \label{NewTFAE:2p}
For any $n+2$ distinct noncollinear points in $\R^2$, there are clouds centered at these points which cover $\R^2$
\item \label{NewTFAE:N}
$n+2$ clouds cover $\R^N$
\item \label{NewTFAE:Np}
For any $n+2$ distinct noncollinear points in $\R^N$, there are clouds centered at these points which cover $\R^N$
\end{enumerate*}
\end{thm}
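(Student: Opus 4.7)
By Theorem \ref{TFAE} we already have the equivalence \ref{NewTFAE:CH} $\Leftrightarrow$ \ref{NewTFAE:2} $\Leftrightarrow$ \ref{NewTFAE:2p}, and \ref{NewTFAE:Np} $\Rightarrow$ \ref{NewTFAE:N} is trivial since \ref{NewTFAE:Np} just specifies where the clouds are centered. The theorem thus reduces to establishing two implications: the lifting \ref{NewTFAE:2p} $\Rightarrow$ \ref{NewTFAE:Np}, which is geometric in nature, and the reverse direction \ref{NewTFAE:N} $\Rightarrow$ \ref{NewTFAE:CH}, which is a higher-dimensional analogue of Schmerl's argument. Together with \ref{NewTFAE:Np} $\Rightarrow$ \ref{NewTFAE:N}, these close the cycle of equivalences.

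For the lifting \ref{NewTFAE:2p} $\Rightarrow$ \ref{NewTFAE:Np}, given $n+2$ distinct noncollinear points $a_1,\ldots,a_{n+2} \in \R^N$, my plan is to pick a generic linear projection $\pi \colon \R^N \to \R^2$ so that the images $\bar a_i := \pi(a_i)$ are distinct and noncollinear in $\R^2$; this is possible because, for any fixed pair of linearly independent vectors in $\R^N$, a generic $2$-dimensional projection keeps them linearly independent in $\R^2$, and the $a_i$ contain such a pair by noncollinearity. By \ref{NewTFAE:2p} applied to the $\bar a_i$, there exist clouds $A_i \subset \R^2$ around $\bar a_i$ with $\bigcup_i A_i = \R^2$; after a finite modification moving each $\bar a_i \in A_i$ into some $A_j$ with $j \neq i$ (which preserves both the cloud property and the cover), we may assume $\bar a_i \notin A_i$ throughout. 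I then set $C_i := \pi^{-1}(A_i)$, so that $\bigcup_i C_i = \pi^{-1}(\R^2) = \R^N$. To verify that each $C_i$ is a cloud around $a_i$, take a line $L$ through $a_i$: either $\pi|_L$ is constant (equal to $\bar a_i$), in which case $L \cap C_i = \emptyset$ by our normalization; or $\pi|_L$ is a bijection onto a line $\ell$ through $\bar a_i$ in $\R^2$, in which case $L \cap C_i$ is in bijection with $A_i \cap \ell$, a finite set by the cloud property downstairs. Either way the intersection is finite.

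For \ref{NewTFAE:N} $\Rightarrow$ \ref{NewTFAE:CH}, assume $n+2$ clouds $C_1,\ldots,C_{n+2}$ with centers $a_1,\ldots,a_{n+2}$ cover $\R^N$; we wish to deduce $2^{\aleph_0} \leq \aleph_n$. My plan is to port Schmerl's proof from \cite{ConversenClouds} step by step to $\R^N$. Since the cloud condition is intrinsically about $1$-dimensional lines, which look identical in $\R^2$ and in $\R^N$, the transfinite combinatorial skeleton of Schmerl's construction should survive the passage to higher dimensions essentially unchanged; the real work is in the geometric bookkeeping around the possible collinearities of the $a_i$ and the positions of the points being covered. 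This faithful adaptation is the main obstacle of the proof: every appeal to planar-specific incidence facts (such as ``two nonparallel lines always meet in a single point'') must be replaced by its $N$-dimensional counterpart, and one must verify at each stage that these substitutions do not inflate the cardinal bound past $\aleph_n$. Once the adaptation is in place, Theorem \ref{TFAE} closes the cycle of equivalences.
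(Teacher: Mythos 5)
Your reduction of the theorem to the two implications \ref{NewTFAE:2p} $\Rightarrow$ \ref{NewTFAE:Np} and \ref{NewTFAE:N} $\Rightarrow$ \ref{NewTFAE:CH} is exactly right, and your argument for the first of these is essentially the paper's: a generic surjective linear map $\pi : \R^N \rightarrow \R^2$ keeps the $n+2$ points distinct and noncollinear (the paper realizes $\pi$ as a collineation of $\R^N$ followed by the projection onto the first two coordinates, and proves the genericity by showing the bad set of shear parameters is a finite union of hyperplanes), and the preimage $\pi^{-1}(A_i)$ of a cloud not containing its center is a cloud around $a_i$, by the same case split on whether a line through $a_i$ projects to a single point or bijectively onto a line. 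That half is fine.

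The gap is in \ref{NewTFAE:N} $\Rightarrow$ \ref{NewTFAE:CH}, where you offer a plan (``port Schmerl's proof step by step'') rather than a proof, and the plan misidentifies where the difficulty lies. Schmerl's argument in \cite{ConversenClouds} is not a transfinite construction whose combinatorics must be re-examined in $\R^N$; all the set theory is outsourced to Kuratowski's theorem, which characterizes $|X| \leq \aleph_n$ by the existence of a cover of $X^{n+2}$ by sets $D_1, \ldots, D_{n+2}$ such that $D_i$ meets every line parallel to the $i^{th}$ coordinate axis finitely. The real content is geometric: one must transport the $n+2$ clouds from $\R^N$ into such a configuration inside some $X^{n+2} \subset \R^{n+2}$. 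The paper does this by (i) choosing the linear map $T : \R^{n+2} \rightarrow \R^N$ with $T(e_i) = p_i$ --- note that $T$ is typically very far from injective, since $n+2$ may exceed $N$, so one must prove that $T$ is injective on precisely those lines that matter --- and (ii) applying a projective collineation $S$ of $P_{n+2}(\R)$ sending each $E(e_i)$ to the point at infinity of the $i^{th}$ axis, so that lines through $e_i$ become lines parallel to that axis. Since $S$ does not preserve the affine chart $E(\R^{n+2})$, one must then shrink to a box $X^{n+2} = (-\epsilon,\epsilon)^{n+2}$ small enough that $S^{-1}(E(X^{n+2}))$ stays inside $E(\R^{n+2})$ and the relevant lines stay inside the region where $T$ is injective. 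None of this is ``bookkeeping around planar incidence facts''; it is the construction itself, and your proposal does not contain it.
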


Note that the implication $\ref{NewTFAE:Np} \Rightarrow \ref{NewTFAE:N}$ is trivial. Then to prove this equivalence, it will suffice to prove the implications $\ref{NewTFAE:2p} \Rightarrow \ref{NewTFAE:Np}$ and $\ref{NewTFAE:N} \Rightarrow \ref{NewTFAE:CH}$. As we noted previously, the implication $\ref{NewTFAE:2p} \Rightarrow \ref{NewTFAE:Np}$ only requires a simple geometric argument; this will be covered in section \ref{sect:lowToHigh}. We prove the implication $\ref{NewTFAE:N} \Rightarrow \ref{NewTFAE:CH}$ by relying heavily on the work of Schmerl; we will adapt his proof from \cite{ConversenClouds} to higher dimensions. This is done in section \ref{sect:card}.

Finally, we can wonder how many clouds are needed to cover $\R^N$ without any assumptions on the size of the continuum. Komjáth showed that countably many clouds cover $\R^2$; we show with the following theorem that the same holds in $\R^N$. We include the proof of this result in section \ref{sect:lowToHigh}, as the proof follows in the same way as the implication $\ref{NewTFAE:2p} \Rightarrow \ref{NewTFAE:Np}$ mentioned above.

\begin{thm}
\label{thm:countably}
Let $N \geq 2$. Then countably many clouds cover $\R^N$.
\end{thm}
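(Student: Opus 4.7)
The plan is to lift Komjáth's countable covering of $\R^2$ by clouds to a countable covering of $\R^N$ via direct product with $\R^{N-2}$; this is the same kind of geometric argument used in the implication $\ref{NewTFAE:2p} \Rightarrow \ref{NewTFAE:Np}$.

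Start with clouds $D_1, D_2, \ldots \subset \R^2$ around centers $b_1, b_2, \ldots$ satisfying $\bigcup_i D_i = \R^2$, as guaranteed by Komjáth. A brief preprocessing step---deleting each $b_i$ from $D_i$ and then redistributing the (countably many) orphaned centers, one at a time, into other clouds (so no cloud gains more than finitely many extra points, and thus retains the cloud property)---lets us assume without loss of generality that $b_i \notin D_i$ for every $i$.

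Identify $\R^N$ with $\R^2 \times \R^{N-2}$ and define
\[ \tilde{C}_i := D_i \times \R^{N-2}, \qquad \tilde{b}_i := (b_i, 0). \]
Coverage is immediate from the Komjáth cover, since $\bigcup_i \tilde{C}_i = (\bigcup_i D_i) \times \R^{N-2} = \R^N$. To verify that $\tilde{C}_i$ is a cloud around $\tilde{b}_i$, parametrize an arbitrary line through $\tilde{b}_i$ as $\{(b_i + \lambda u,\, \lambda v) : \lambda \in \R\}$ for some nonzero direction $(u,v) \in \R^2 \times \R^{N-2}$. A point on this line belongs to $\tilde{C}_i$ precisely when $b_i + \lambda u \in D_i$. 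If $u = 0$, this would force $b_i \in D_i$, which was excluded, so the line misses $\tilde{C}_i$ entirely; if $u \neq 0$, then $\{b_i + \lambda u : \lambda \in \R\}$ is a genuine line through $b_i$ in $\R^2$ and meets the cloud $D_i$ in finitely many points.

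The only delicate point is really the degenerate case $u = 0$, that is, lines through $\tilde{b}_i$ whose direction lies wholly in the $\R^{N-2}$ factor: such a line does not interact with the $\R^2$ cloud structure at all, and so must be arranged to miss $\tilde{C}_i$ outright. The preprocessing condition $b_i \notin D_i$ is exactly what ensures this; the remainder of the construction is a straightforward product, and so I expect this to be the only real piece of bookkeeping in the proof.
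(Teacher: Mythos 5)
Your proposal is correct and follows essentially the same route as the paper: take Komjáth's countable cover of $\R^2$, arrange that each cloud omits its own center, and extend each cloud as a product with $\R^{N-2}$, with the case $u=0$ handled exactly as in the paper's Lemma \ref{lem:cloudExtension}. If anything, your explicit redistribution of the removed centers into other clouds is slightly more careful than the paper's one-line remark, which glosses over preserving coverage after deleting the $a_i$.
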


With this last result, we have completely resolved this question as to how many clouds cover $\R^N$, both in relation to the continuum hypothesis and to coverings of $\R^2$. A follow-up question would be to investigate whether these covering results can be extended to $\R^N$ for sets other than clouds. For example, in \cite{ConversenClouds}, Schmerl defines a \textbf{spray} centered at $a \in \R^2$ as a set $C$ such that the intersection of $C$ with any circle (in the usual sense) is finite.

In \cite{Sprays}, following the work of de la Vega in \cite{EquivalenceClasses}, Schmerl shows that for any $n+2$ distinct collinear points in $\R^2$, $2^{\aleph_0} \leq \aleph_n$ if and only if $\R^2$ can be covered by sprays around these points. Schmerl also shows that for any three distinct noncollinear points in $\R^2$, there are sprays centered at these points which cover $\R^2$.

It is natural to ask whether these results can be extended to $\R^N$ for any $N \geq 2$. However, the more general approach would be to investigate the work of de la Vega \cite{EquivalenceClasses}, which determines conditions for what equivalence relations allow finite coverings of $\R^2$. It would be interesting to determine whether his results hold in $\R^N$.

\textbf{Acknowledgment.} The author wishes to extend their gratitude to Professor Marcin Sabok of McGill University. This research was done as an undergraduate research project under his supervision; the author also wishes to thank him for his help in editing this paper.

\section{Lower to Higher Dimensions}
\label{sect:lowToHigh}

The goal of this section is to prove theorem \ref{thm:countably}, as well as the following result, which corresponds to the implication $\ref{NewTFAE:2p} \Rightarrow \ref{NewTFAE:Np}$ of theorem \ref{NewTFAE}.

\begin{thm}
\label{thm:2toN}
Let $n \geq 3$, $N \geq 2$. Suppose that for any $n$ distinct noncollinear points in $\R^2$, there are clouds centered at these points that cover $\R^2$. Then for any $n$ distinct noncollinear points in $\R^N$, there are clouds centered at these points that cover $\R^N$.
\end{thm}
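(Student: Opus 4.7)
The plan is to reduce the $N$-dimensional problem to the $2$-dimensional hypothesis via a linear projection, then repair a single degenerate feature of the naive pullback. First I would pick a linear projection $\pi : \R^N \to P_0$ onto a $2$-plane $P_0$, chosen generically so that the images $b_i := \pi(a_i)$ are $n$ distinct noncollinear points in $P_0$. This is possible because the $a_i$ contain a noncollinear triple, say $a_1, a_2, a_3$: for a generic $(N-2)$-dimensional kernel $\ker \pi$, no nonzero difference $a_i - a_j$ lies in $\ker \pi$ (giving distinctness) and $\ker \pi$ is transverse to $\mathrm{span}\{a_2 - a_1, a_3 - a_1\}$ (so $b_1, b_2, b_3$ remain noncollinear, hence the full set $\{b_i\}$ is noncollinear). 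Applying the hypothesis in $P_0 \cong \R^2$ yields clouds $D_1, \ldots, D_n \subset P_0$ around $b_1, \ldots, b_n$ with $\bigcup_i D_i = P_0$; by adding centers (which preserves the cloud property) I may assume $b_i \in D_i$.

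The naive candidate $\pi^{-1}(D_i)$ is \emph{almost} a cloud around $a_i$. For any line $L$ through $a_i$ whose direction is not parallel to $\ker \pi$, the map $\pi|_L$ is a bijection onto a line $\pi(L) \subset P_0$ through $b_i$, so $L \cap \pi^{-1}(D_i) = (\pi|_L)^{-1}(\pi(L) \cap D_i)$ is finite by the cloud property of $D_i$. The only failure is when $L$ lies in the fiber $\pi^{-1}(b_i)$, since that entire fiber is contained in $\pi^{-1}(D_i)$. To fix this I would strip away the bad fiber and hand it to a different cloud: set $V_i := \pi^{-1}(b_i) \setminus \{a_i\}$, fix any fixed-point-free permutation $\sigma$ of $\{1, \ldots, n\}$ (possible since $n \geq 3$), and define
\[
C_j := \bigl( \pi^{-1}(D_j) \setminus V_j \bigr) \cup V_{\sigma^{-1}(j)}.
\]

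Coverage is immediate: $\bigcup_j \pi^{-1}(D_j) = \R^N$ and every removed fiber $V_i$ reappears inside $C_{\sigma(i)}$. For the cloud condition on $C_j$, I would consider a line $L$ through $a_j$ in two cases. If $L \subset \pi^{-1}(b_j)$ then $L \cap (\pi^{-1}(D_j) \setminus V_j) = \{a_j\}$ and $L$ is disjoint from $V_{\sigma^{-1}(j)}$, because distinct $b$'s give disjoint fibers. Otherwise $L \cap \pi^{-1}(D_j)$ is finite as above, and $L$ meets the added fiber $V_{\sigma^{-1}(j)}$ in at most one point, since $\pi|_L$ is bijective onto a line which hits $b_{\sigma^{-1}(j)}$ at most once. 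The main obstacle is conceptual: recognizing that the naive pullback fails only along the fibers $\pi^{-1}(b_i)$, and that these bad fibers can safely be handed off to other clouds because a fiber not containing $a_j$ meets each line through $a_j$ in at most one point. Once this is observed, every verification reduces to an elementary incidence statement about lines and affine subspaces.
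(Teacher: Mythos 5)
Your proof is correct and takes essentially the same route as the paper: project onto a generically chosen $2$-plane so that the centers remain distinct and noncollinear, invoke the planar hypothesis, and pull the clouds back along the projection. The only differences are cosmetic --- the paper realizes the generic projection as a collineation followed by the coordinate projection onto the first two coordinates (its Lemma \ref{lem:restrictionMap}) and neutralizes the degenerate central fiber by deleting the center from each planar cloud before pulling back, whereas you excise the fiber afterward and reassign it via a fixed-point-free permutation, which if anything makes the coverage bookkeeping more explicit.
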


The key idea in the proofs of these theorems is finding a way to extend clouds in $\R^2$ to form clouds in $\R^N$. This is, in fact, very simple: we just extend the cloud linearly into the remaining dimensions. This idea is encapsulated in the following lemma; see also figure \ref{fig:extendCloud}.

\begin{figure}[h]
\includegraphics[width=\textwidth]{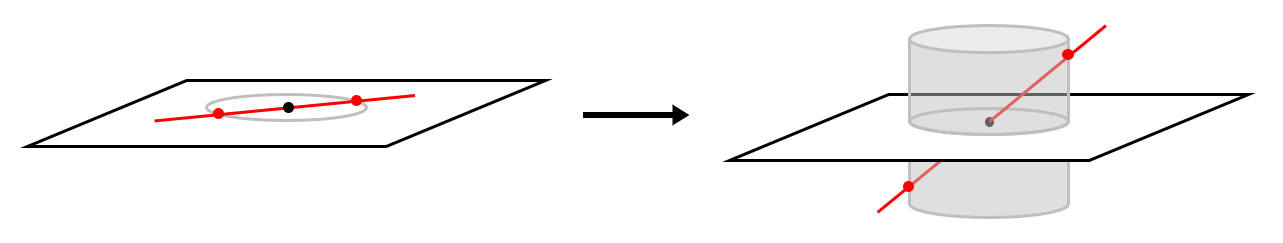}
\centering
\caption{A circle (in grey) is an example of a cloud in $\R^2$ around its origin. Here it is extended into $\R^3$ as described in lemma \ref{lem:cloudExtension}. Lines through the center intersect in "the same" places.}
\label{fig:extendCloud}
\end{figure}

\begin{lem}
\label{lem:cloudExtension}
Let $2 \leq K < N$. Let $C \subset \R^K$ be a cloud centered at $a$. If $a \notin C$, then the set $C' = \{(x,y) \in \R^N : x \in C\}$ is a cloud centered at $a' = (a,b)$ for any $b \in \R^{N-K}$.
\end{lem}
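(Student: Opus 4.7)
My plan is to take an arbitrary line $L \subset \R^N$ passing through $a' = (a,b)$, split its direction vector according to the product decomposition $\R^N = \R^K \times \R^{N-K}$, and then analyze two cases based on whether the $\R^K$-component of the direction vanishes. The projection $\pi : \R^N \to \R^K$ onto the first $K$ coordinates is the natural tool: membership in $C'$ only constrains the first $K$ coordinates, so $(x,y) \in L \cap C'$ iff $\pi(x,y) \in \pi(L) \cap C$.

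Write the line as $L = \{(a + t v_1, \, b + t v_2) : t \in \R\}$ where $(v_1, v_2) \in \R^K \times \R^{N-K}$ is nonzero. If $v_1 = 0$, then the first $K$ coordinates of every point on $L$ are identically $a$, so a point of $L$ lies in $C'$ only if $a \in C$. The hypothesis $a \notin C$ rules this out, so $L \cap C' = \emptyset$, which is certainly finite.

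If $v_1 \neq 0$, then $\pi(L)$ is a genuine line in $\R^K$ through $a$. Since $C$ is a cloud around $a$, the intersection $\pi(L) \cap C$ is a finite set $\{x_1, \ldots, x_m\}$. Because $v_1 \neq 0$, the parameter $t \in \R$ is uniquely determined by the first $K$ coordinates of the point $a + t v_1$, so there are unique parameters $t_1, \ldots, t_m$ such that the only points of $L$ projecting into $C$ are $(x_i, b + t_i v_2)$ for $i = 1, \ldots, m$. Thus $L \cap C'$ has exactly $m$ elements, and in particular is finite.

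Since $L$ was an arbitrary line through $a'$, this shows $C'$ is a cloud around $a'$. I do not expect a real obstacle here: the only subtlety is remembering to dispatch the degenerate case $v_1 = 0$, which is exactly where the hypothesis $a \notin C$ gets used; without that hypothesis the ``vertical'' line through $a'$ with direction $(0, v_2)$ would lie entirely inside $C'$ and the conclusion would fail.
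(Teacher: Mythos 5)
Your proof is correct and follows essentially the same route as the paper's: the same case split on whether the $\R^K$-component of the direction vector vanishes (with the hypothesis $a \notin C$ dispatching the degenerate case), and the same use of the projection onto the first $K$ coordinates together with injectivity of the parametrization to identify $L \cap C'$ with $\pi(L) \cap C$. No gaps.
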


\begin{proof}
We must show that the intersection of $C'$ with any line through $a'$ is finite.
Let $L$ be a line through $a'$. We can write $L$ as $\{a' + tv': t \in \R\}$ for some $v' \neq 0$. Write $v' = (v,w)$ for some $v \in \R^K, w \in \R^{N-K}$. First note that if $v = 0$, then every point in $L$ is of the form $(a,b+tw)$. Since we assumed $a \notin C$, no such points are in $C'$, and so $|L \cap C'| = 0$.

Conversely, suppose $v \neq 0$. Define the map $T : L \rightarrow \R^K : a' + tv' \mapsto a+tv$. Since $v' \neq 0$, this map is well-defined; since $v \neq 0$, it is injective. So, $T(L)$ is a line is $\R^K$ through $a$. Moreover, since $a' + tv' = (a+tv,b+tw)$, we have that $a'+tv' \in C'$ if and only if $a+tv \in C$, by definition of $C'$. Therefore $|L \cap C'| = |T(L) \cap C|$, and since $T(L)$ is a line through $a$ and $C$ is a cloud around $a$, the latter quantity is finite.
\end{proof}

The proof of theorems \ref{thm:countably} and \ref{thm:2toN} are now rather straightforward. For the former, all we have to do is select clouds in $\R^2$ which cover the plane, and extend them into $\R^N$ as in lemma \ref{lem:cloudExtension}. We present this more formally below.

\begin{proof}[Proof of theorem \ref{thm:countably}]
By a result due to Komjáth \cite{ThreeClouds}, we know that $\R^2$ can be covered by countably many clouds (in particular, so-called \textit{circles}). Let $C_1, C_2, ...$ be such clouds centered at $a_1, a_2, ...$. Remark that adding or removing a single point does not change whether a set is a cloud, so we can assume that $a_i \notin C_i$ for each $i$. 

Let $C_1', C_2', ...$ be the extensions to $\R^N$ of these clouds as described in lemma \ref{lem:cloudExtension}. Since $C_i' = \{(x,y) \in \R^N : x \in C_i \}$, it is clear that the $C_i'$ cover $\R^N$, since the $C_i$ cover $\R^2$. Thus we have found countably many clouds that cover $\R^N$.
\end{proof}

The proof of theorem \ref{thm:2toN} is very similar; we project our points in $\R^N$ onto the plane, get clouds around these points, and then extend these clouds into $\R^N$ as in lemma \ref{lem:cloudExtension}. However, we run into a small problem. If we have points which are noncollinear in $\R^N$, their projections may not be noncollinear (or even distinct) in the plane.
To get around this, we first must perform a transformation on $\R^N$ to remove this projection problem. 

This transformation we need must be a \textit{collineation}, i.e. a bijective map which sends collinear points to collinear points. It is easy to see that if $C$ is a cloud centered at $a$ and $f$ is a collineation, then $f(C)$ is a cloud centered at $f(a)$; moreover, $f^{-1}$ is a collineation as well. We will need to find a collineation which can transform our points such that their projections onto $\R^2$ are distinct and noncollinear. This technical result is presented in lemma \ref{lem:restrictionMap} at the end of this section. For now, we present the proof of theorem \ref{thm:2toN} while assuming the existence of such a map.

\begin{proof}[Proof of theorem \ref{thm:2toN}]
In this proof, for any $x \in \R^N$, we'll use $\hat{x} \in \R^2$ to denote the restriction of $x$ to its first two coordinates.
Let $a_1, ..., a_n$ be distinct noncollinear points in $\R^N$. By lemma \ref{lem:restrictionMap}, there exists a collineation $T : \R^N \rightarrow \R^N$ such that $\hat{T(a_1)}, ..., \hat{T(a_n)}$ are distinct noncollinear points in $\R^2$. Then by the assumption of the theorem, there are clouds $\hat{C_1}, ..., \hat{C_n}$ in $\R^2$ centered at $\hat{T(a_1)}, ..., \hat{T(a_n)}$ which cover $\R^2$. 

Since adding or removing a single point does not change whether a set is a cloud, we can assume that $\hat{T(a_i)} \notin \hat{C_i}$ for $1 \leq i \leq n$.
Then we can apply lemma \ref{lem:cloudExtension} to extend these to clouds $C_1, ..., C_n$ centered at $T(a_1), ..., T(a_n)$. Then the $C_i$ cover $\R^N$, since the $\hat{C_i}$ cover $\R^2$.
Finally, we have that $D_i = T^{-1}(C_i)$ is a cloud centered at $a_i$ for each $1 \leq i \leq n$. Since the $C_i$ cover $\R^N$, so do the $D_i$, and we are done.
\end{proof}

Finally, we present the proof of the existence of the collineation we needed.
Note that this result is only needed to ensure that the clouds which cover $\R^N$ are centered at the desired points. If we are just concerned about covering $\R^N$ with clouds centered at any points, we simply extend the clouds to $\R^N$, as is done in the proof of theorem \ref{thm:countably}.

\begin{lem}
\label{lem:restrictionMap}
Let $n \geq 3$, $N \geq 2$. Denote by $\widehat{x}$ the restriction of $x \in \R^N$ to $\R^2$. Let $a_1, ..., a_n \in \R^N$ be distinct and noncollinear. Then there exists a collineation $T : \R^N \rightarrow \R^N$ such that $\widehat{T(a_1)}, ..., \widehat{T(a_n)}$ are distinct noncollinear points in $\R^2$.
\end{lem}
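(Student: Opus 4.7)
The plan is to seek $T$ among invertible linear maps $\R^N \to \R^N$, which are automatically collineations. Writing $T$ as a matrix with rows $r_1, \ldots, r_N \in \R^N$, the restriction to the first two coordinates factors as $\widehat{T(x)} = (r_1 \cdot x, \, r_2 \cdot x)$, so all the geometric conditions in the lemma depend only on the pair $(r_1, r_2) \in \R^{2N}$. Once such a pair is found --- with $r_1, r_2$ additionally linearly independent --- I would complete it to a basis $r_1, r_2, r_3, \ldots, r_N$ of $\R^N$ and take $T$ to be the matrix with these rows.

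The distinctness conditions are easy to arrange: $\widehat{T(a_i)} \neq \widehat{T(a_j)}$ is equivalent to $(r_1, r_2) \notin (a_i - a_j)^\perp \times (a_i - a_j)^\perp$, so distinctness rules out a finite union of proper linear subspaces of $\R^{2N}$.

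The main step is noncollinearity. Since the $a_i$ are noncollinear in $\R^N$, we may relabel so that $v := a_2 - a_1$ and $w := a_3 - a_1$ are linearly independent. Then $\widehat{T(a_1)}, \widehat{T(a_2)}, \widehat{T(a_3)}$ are collinear in $\R^2$ precisely when the $2 \times 2$ determinant
\[
D(r_1, r_2) := (r_1 \cdot v)(r_2 \cdot w) - (r_1 \cdot w)(r_2 \cdot v)
\]
vanishes. The key point --- and the only place the noncollinearity hypothesis is really used --- is that $D$ is not the zero polynomial: at $r_1 = v$, $r_2 = w$ one computes $D = \|v\|^2 \|w\|^2 - (v \cdot w)^2 > 0$ by Cauchy--Schwarz and the linear independence of $v, w$. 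Hence $\{D = 0\}$ is a proper algebraic subset of $\R^{2N}$.

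Putting everything together, the set of ``bad'' pairs $(r_1, r_2)$ --- those failing some distinctness condition, failing noncollinearity, or being linearly dependent --- is a finite union of proper algebraic subsets of $\R^{2N}$, hence a proper subset. Picking any $(r_1, r_2)$ in its complement and extending to a basis yields the required collineation $T$. The entire obstacle is concentrated in verifying that the polynomial $D$ above is not identically zero, which is exactly where the hypothesis on the $a_i$ comes in; everything else is a routine genericity argument.
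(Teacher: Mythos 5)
Your proof is correct. It follows the same broad strategy as the paper's --- restrict to affine/linear collineations and show that a generic one works because the ``bad'' parameters form a finite union of measure-zero algebraic sets --- but with a different decomposition of the work. The paper first normalizes: it composes a translation with a linear bijection sending three noncollinear points among the $a_i$ to $0, e_1, e_2$, whose projections to $\R^2$ are automatically noncollinear, and then only has to choose a shear $T_3^{\Lambda}$ with $\Lambda \in \R^{N-2}$ generic to make the projections pairwise distinct; noncollinearity never requires a genericity argument there. You instead parametrize directly by the first two rows $(r_1,r_2) \in \R^{2N}$ of the matrix, so you must additionally show that the collinearity locus $\{D=0\}$ is proper, which your evaluation $D(v,w) = \|v\|^2\|w\|^2 - (v\cdot w)^2 > 0$ (Cauchy--Schwarz plus independence of $v,w$) does correctly; note in passing that $D(r_1,r_2) \neq 0$ already forces $r_1,r_2$ to be linearly independent, so that last condition in your bad set is redundant. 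The paper's normalization buys a smaller parameter space and automatic noncollinearity; your version is more uniform, handling every requirement by the single principle that a finite union of proper algebraic subsets of $\R^{2N}$ cannot exhaust it. The one step you should make explicit is the relabeling: noncollinearity of $\{a_1,\dots,a_n\}$ gives two distinct points and a third off their line, hence the affinely independent triple you need --- the same observation the paper makes.
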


\begin{proof}
The case $N=2$ is trivial, so we assume $N > 2$.
First, define the map $T_1 : x \mapsto x - a_1$, and let $b_i = T_1(a_i)$ for each $i$. Note that $T_1$ is a collineation, and $b_1 = 0$. 

Since the $a_i$ are noncollinear, there exists some $i > 2$ such that $b_i$ does not lie on the line through $b_1 = 0$ and $b_2$ (note $b_2 \neq b_1$ since the $a_i$ are distinct). Without loss of generality, we can say this is $b_3$. Then $b_2$ and $b_3$ are linearly independent, and since $N \geq 2$, we can define a linear bijection $T_2 : \R^N \rightarrow \R^N$ such that $T_2(b_2) = e_1$ and $T_2(b_3) = e_2$. Let $c_i = T_2(b_i)$ for each $i$; note that $T_2$ is a collineation, and $c_1 = 0$, $c_2 = e_1$, $c_3 = e_2$.

Next, for any $\Lambda = (\lambda_3, ..., \lambda_N) \in \R^{N-2}$, let $T_3^{\Lambda}$ be the linear bijection defined by:
\begin{align*}
T_3^{\Lambda}(e_1) = e_1, &&
T_3^{\Lambda}(e_2) = e_2, &&
T_3^{\Lambda}(e_i) = e_i + \lambda_i e_1 \text{ for } i > 2
\end{align*}
Note that for any $\Lambda$, we have $T_3^{\Lambda}(c_1) = 0$, $T_3^{\Lambda}(c_2) = e_1$, $T_3^{\Lambda}(c_3) = e_2$. We claim that there is some $\Lambda \in \R^{N-2}$ such that $\widehat{T_3^{\Lambda}(c_i)} \neq \widehat{T_3^{\Lambda}(c_j)}$ for any $i \neq j$. 

If this is the case, then $T = T_3^{\Lambda} \circ T_2 \circ T_1$ is a collineation. It also maps $a_1, a_2, a_3$ to $0, e_1, e_2$, and the above condition lets us conclude that $\widehat{T(a_1)}, ..., \widehat{T(a_n)}$ are distinct, non-collinear points. So, to conclude the proof, we just have to show that there exists such a $\Lambda$.

Let $i \neq j$, and write $c_i = (x_1^i, ..., x_N^i)$ and $c_j = (x_1^j, ..., x_N^j)$. Then:
\begin{align*}
\widehat{T_3^{\Lambda}(c_i)} = 
\left( x_1^i + \sum \limits_{k=3}^N \lambda_k x_k^i 
\quad , \quad x_2^i \right)
&&
\widehat{T_3^{\Lambda}(c_j)} = 
\left( x_1^j + \sum \limits_{k=3}^N \lambda_k x_k^j 
\quad , \quad x_2^j \right)
\end{align*}
For these to be equal, we must have:
\begin{align*}
x_2^i - x_2^j = 0
&& \text{and} &&
(x_1^i - x_1^j) + \sum \limits_{k=3}^N \lambda_k (x_k^i - x_k^j) = 0
\end{align*}
If this is the case, then we cannot have that $x_k^i = x_k^j$ for every $k \geq 3$; otherwise we'd need $x_1^i = x_1^j$ and $x_2^i = x_2^j$ as well, which would mean $c_i = c_j$. Since $T_1$ and $T_2$ are bijections and the $a_i$ are distinct, the $c_i$ must be distinct, so this is a contradiction. Therefore, we have one non-trivial condition on the values of the $\lambda_k$, so
 the set of $\Lambda \in \R^{N-2}$ that make these equalities hold is a hyperplane.

That is, for each $1 \leq i < j \leq n$, the set of $\Lambda$ such that $\widehat{T_3^{\Lambda}(c_i)} = \widehat{T_3^{\Lambda}(c_j)}$ is a set of measure zero in $\R^{N-2}$. Therefore, we can pick some $\Lambda$ such that this equality does not hold for every $1 \leq i < j \leq n$. This concludes the proof.
\end{proof}

\section{Existence of Clouds Implies Cardinality Statements}
\label{sect:card}

The goal of this section is to prove the following result, which corresponds to the implication $\ref{NewTFAE:N} \Rightarrow \ref{NewTFAE:CH}$ of theorem \ref{NewTFAE}. As noted in the introduction, the case $N=2$ was proved by Schmerl \cite{ConversenClouds}.

\begin{thm}
\label{thm:schmerlAdapt}
Let $1 \leq n < \omega$, $N \geq 2$. If $\R^N$ can be covered by $n+2$ clouds, then $2^{\aleph_0} \leq \aleph_n$.
\end{thm}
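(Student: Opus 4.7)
My plan is to prove the contrapositive by adapting Schmerl's argument from \cite{ConversenClouds} for $N = 2$ as directly as possible to $\R^N$. Assume for contradiction that $2^{\aleph_0} > \aleph_n$ and that $\R^N$ is covered by clouds $C_1,\dots,C_{n+2}$ centered at $a_1,\dots,a_{n+2}$. The target is to exhibit, via a transfinite construction of length at most $\aleph_{n+1}$, a point of $\R^N$ lying outside every $C_i$.

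The first step would be to normalize the configuration of centers. Using the collineation machinery developed already in the proof of Lemma \ref{lem:restrictionMap}, I may assume without loss of generality that the $a_i$ sit in a convenient position, e.g.\ $a_1 = 0$, $a_2 = e_1$, $a_3 = e_2$, with the remaining centers chosen to satisfy some genericity condition (no unexpected collinearities among the $a_i$ and the auxiliary points used below). After this reduction, I would set up Schmerl's transfinite recursion: an enumeration of $\aleph_{n+1}$ candidate points, at each stage tracking, for each $i$, the finite set of lines through $a_i$ that have so far been ``committed'' to $C_i$. The combinatorial engine should then produce, at some stage $\alpha < \aleph_{n+1}$, a point that cannot be assigned to any $C_i$ without contradicting that cloud's finiteness property on the already-committed line through the corresponding $a_i$. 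The key quantitative input is that lines through a given $a_i$ in $\R^N$ form an $(N-1)$-dimensional projective family, of cardinality $2^{\aleph_0}$, so that the abundance of available ``directions'' at each center is at least as good as in $\R^2$.

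The main obstacle I anticipate is that Schmerl's planar argument very likely relies on two-dimensional incidence facts which fail in $\R^N$. In the plane, two lines through distinct centers meet in exactly one point, so a candidate point simultaneously constrains one line through each center. In $\R^N$ with $N \geq 3$, two generic lines do not meet at all, which means the natural higher-dimensional analogue of Schmerl's ``simultaneous line selection'' needs care: one must either restrict attention to configurations lying in a suitable family of 2-planes through pairs of centers, or replace ``line through two centers'' with higher-dimensional flats and rework the counting. The delicate part of the adaptation is to ensure that, after this modification, the cardinality estimates from Schmerl's recursion remain valid — i.e.\ that the total number of forbidden candidates at stage $\alpha$ stays strictly below $2^{\aleph_0}$, so that the construction can be continued. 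Once this is in place, the contradiction with $2^{\aleph_0} > \aleph_n$ will follow by the same bookkeeping as in \cite{ConversenClouds}.
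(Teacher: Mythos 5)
Your proposal does not contain a proof; it is a plan whose essential steps are left open, and the plan itself rests on a misreading of which of Schmerl's arguments is relevant here. In \cite{ConversenClouds}, the transfinite recursion with ``committed'' lines is the engine of the \emph{other} implication (constructing clouds from $2^{\aleph_0} \leq \aleph_n$); the implication you are asked to prove is obtained by Schmerl, and by this paper, through a reduction to Kuratowski's theorem characterizing $|X| \leq \aleph_n$ by decompositions of $X^{n+2}$ into sets $D_1, \dots, D_{n+2}$ such that $D_i$ meets every line parallel to the $i$th coordinate axis finitely. Your sketch never states the recursion, never defines the bookkeeping, and explicitly defers the two points where all the mathematical content lives: how to replace the planar incidence fact that lines through distinct centers meet, and how to verify that the forbidden set at each stage stays small. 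Writing that ``the delicate part is to ensure the estimates remain valid'' names the gap without closing it; as written there is no argument to check.

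For comparison, the paper's proof avoids every difficulty you anticipate. It takes the linear map $T : \R^{n+2} \rightarrow \R^N$ with $T(e_i) = p_i$ (so the mismatch between the $n+2$ of Kuratowski's theorem and the ambient dimension $N$ is absorbed at the outset), embeds $\R^{n+2}$ into $P_{n+2}(\R)$ via lemma \ref{lem:embedding}, and applies a homeomorphic collineation $S$ from lemma \ref{lem:collineation} fixing $E(0)$ and sending each $E(e_i)$ to the point at infinity $\infty_i$ of the $i$th axis. A line in a small cube $X^{n+2} = (-\epsilon,\epsilon)^{n+2}$ parallel to the $i$th axis then pulls back to a line through $e_i$, which $T$ carries injectively to a line through $p_i$; hence $D_i = X^{n+2} \cap E^{-1}(S(E(T^{-1}(C_i))))$ meets such lines finitely, Kuratowski's theorem applies, and $2^{\aleph_0} = |X| \leq \aleph_n$. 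In particular, the incidence problem you worry about (two lines through distinct centers failing to meet when $N \geq 3$) never arises, because the only incidence used is that a point of $X^{n+2}$ lies on exactly one line parallel to each axis. To salvage your direct approach you would essentially have to reprove the hard direction of Kuratowski's theorem inside $\R^N$, and you would still need some analogue of the map $T$ to pass from $n+2$ coordinates to $N$ dimensions, which your sketch does not address at all.
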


This proof will follow the ideas of Schmerl and make use of the projective space $P_m(\R)$. This corresponds to the equivalence classes of $\R^{m+1} \backslash \{0\}$ with respect to the relation $\twiddle$, where $x \twiddle y$ if they are on the same line through the origin. We denote the equivalence class of $x$ by $[x]$; we call these the \textit{homogeneous coordinates} of $x$. We also have \textit{lines} in $P_m(\R)$; these are the images of planes through the origin under the map $x \mapsto [x]$. We will need the following lemmas regarding $P_m(\R)$ for the proof.

\begin{lem}
\label{lem:embedding}
For any $m \geq 1$, the map $E : \R^m \rightarrow P_m(\R) : x \mapsto [x,1]$ is an injective, continuous, open map. Moreover, for any line $L \subset \R^m$, there is a point $p \in P_m(\R)$ such that $E(L) \cup \{p\}$ is a line in $P_m(\R)$. We call this the \textbf{point at infinity} of $L$.
\end{lem}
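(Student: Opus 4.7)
The plan is to handle the four properties (injective, continuous, open, line-extension) in order, each using the natural quotient description $\pi : \R^{m+1} \setminus \{0\} \to P_m(\R)$, $y \mapsto [y]$. Injectivity is immediate: if $[x,1] = [y,1]$, then $(x,1) = \lambda(y,1)$ for some nonzero $\lambda$, and comparing last coordinates forces $\lambda = 1$, hence $x = y$. Continuity follows from factoring $E$ as $\pi \circ \iota$, where $\iota : \R^m \to \R^{m+1} \setminus \{0\}$, $x \mapsto (x,1)$, is continuous and $\pi$ is the quotient map.

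For openness, I would take an open $U \subset \R^m$ and show that $\pi^{-1}(E(U))$ is open in $\R^{m+1} \setminus \{0\}$, which by the quotient topology is equivalent to $E(U)$ being open in $P_m(\R)$. Concretely, a point $(y,t) \in \R^{m+1} \setminus \{0\}$ lies in $\pi^{-1}(E(U))$ if and only if $t \neq 0$ and $y/t \in U$; the set of such points is the preimage of $U$ under the continuous map $(y,t) \mapsto y/t$ defined on the open set $\{t \neq 0\}$, hence open.

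For the line-extension statement, parameterize $L = \{a + tv : t \in \R\}$ with $v \neq 0$. Then $E(L)$ is the image under $\pi$ of the affine line $\{(a,1) + t(v,0) : t \in \R\}$ in $\R^{m+1}$, which sits inside the $2$-dimensional linear subspace $\Pi = \operatorname{span}\{(a,1),(v,0)\}$ (these two vectors are linearly independent because their $(m+1)$-st coordinates differ). A general nonzero element of $\Pi$ is $s(a,1) + t(v,0) = (sa+tv,\, s)$; when $s \neq 0$ it projects to $[a + (t/s)v,\, 1] \in E(L)$, and when $s = 0$ it is a nonzero multiple of $(v,0)$ and projects to the single point $p := [v,0]$. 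Therefore $\pi(\Pi \setminus \{0\}) = E(L) \cup \{p\}$, which is by definition a line in $P_m(\R)$.

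The only step that requires any real care is openness, since it hinges on unwinding the quotient topology on $P_m(\R)$; everything else is essentially bookkeeping with homogeneous coordinates. I do not anticipate any serious obstacle, but I would be mindful to verify that $(a,1)$ and $(v,0)$ really do span a $2$-plane (which is why the hypothesis $v \neq 0$, automatic for a line, is needed) so that the extension $E(L) \cup \{p\}$ is genuinely a projective line and not a single point.
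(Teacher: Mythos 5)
Your proof is correct. The paper actually states Lemma \ref{lem:embedding} without proof, treating it as a standard fact about the projective space $P_m(\R)$, so there is no argument in the paper to compare against; your verification (injectivity via the last homogeneous coordinate, continuity by factoring through the quotient map, openness by checking $\pi^{-1}(E(U))$ is the preimage of $U$ under $(y,t)\mapsto y/t$ on $\{t\neq 0\}$, and the line extension via the $2$-plane $\operatorname{span}\{(a,1),(v,0)\}$) is exactly the standard argument one would supply, and each step checks out, including the linear-independence point you flag at the end.
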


\begin{lem}
\label{lem:collineation}
Let $m \geq 1$, and let $\{x_1, ..., x_{m+1}\}$, $\{y_1, ..., y_{m+1}\}$ be linearly independent subsets of $\R^{m+1}$. Then there exists a homeomorphic collineation $S : P_m(\R) \rightarrow P_m(\R)$ such that $S([x_i]) = [y_i]$ for every $1 \leq i \leq m+1$.
\end{lem}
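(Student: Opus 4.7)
The plan is to produce $S$ as the map on projective space induced by a suitable linear automorphism of $\R^{m+1}$. Since $\{x_1,\dots,x_{m+1}\}$ is a basis of $\R^{m+1}$, there exists a unique linear map $A : \R^{m+1} \to \R^{m+1}$ with $Ax_i = y_i$ for every $i$, and this $A$ is invertible because $\{y_1,\dots,y_{m+1}\}$ is also a basis. Define $S([x]) = [Ax]$ for every $x \in \R^{m+1} \setminus \{0\}$; this is well-defined, since $x' = \lambda x$ implies $Ax' = \lambda Ax$, and is a bijection whose inverse is the analogous map induced by $A^{-1}$.

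I would next verify that $S$ is a homeomorphism. Writing $\pi : \R^{m+1} \setminus \{0\} \to P_m(\R)$ for the quotient projection, by construction $S \circ \pi = \pi \circ A$. Since $\pi$ and $A$ are continuous and $P_m(\R)$ carries the quotient topology, the universal property of quotients gives continuity of $S$. Applying the same argument to $A^{-1}$ shows that $S^{-1}$ is continuous as well, so $S$ is a homeomorphism.

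To check that $S$ is a collineation, recall that a line in $P_m(\R)$ is the image under $\pi$ of a two-dimensional linear subspace $V \subset \R^{m+1}$ (minus the origin). Since $A$ is a linear bijection, $A(V)$ is again a two-dimensional subspace, and $S(\pi(V \setminus \{0\})) = \pi(A(V) \setminus \{0\})$ is therefore a line in $P_m(\R)$. Finally, $S([x_i]) = [Ax_i] = [y_i]$ by construction, so $S$ has the required property.

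There is no real obstacle here; this is the standard fact that $PGL_{m+1}(\R)$ acts transitively on projective frames. The only points requiring care are the well-definedness on equivalence classes and the routine topological check that $S$ is a homeomorphism with respect to the quotient topology, both of which follow immediately from the definition of $P_m(\R)$ and the linearity of $A$.
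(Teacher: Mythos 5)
Your proof is correct: the induced action of the invertible linear map $A$ on $P_m(\R)$ is well-defined, is a homeomorphism by the quotient-topology argument, carries lines (images of planes through the origin) to lines, and sends $[x_i]$ to $[y_i]$. The paper states this lemma without proof, treating it as a standard fact about projective space, and your argument is exactly the standard one it implicitly relies on; the only minor quibble is that your closing remark about transitivity on projective frames is not quite what you proved (a frame consists of $m+2$ points in general position, whereas you only need, and only establish, the statement for the $m+1$ basis points), but this does not affect the argument itself.
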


Finally, the core of Schmerl's proof is to make use of the following theorem, which he attributes to Kuratowski.

\begin{thm}[Kuratowski]
Suppose that $n < \omega$ and that $X$ is any set. Then $|X| \leq \aleph_n$ if and only if there exist $D_1, ..., D_{n+2} \subset X^{n+2}$ which cover $X^{n+2}$, such that $D_i \cap \ell$ is finite whenever $1 \leq i \leq n+2$ and $\ell \subset X^{n+2}$ is a line parallel to the $i^{th}$ coordinate axis.
\end{thm}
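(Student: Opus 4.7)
The plan is to prove the theorem by induction on $n$, handling both implications in each step. In the base case $n = 0$, the forward direction enumerates $X = \{x_k\}_{k < \omega}$ and sets $D_1 = \{(x_i, x_j) : i \leq j\}$, $D_2 = \{(x_i, x_j) : i > j\}$, so that thinness along each axis is immediate. For the backward direction, write $A_y = \{x : (x,y) \in D_1\}$ and $B_x = \{y : (x,y) \in D_2\}$ (both finite by assumption), fix a countably infinite $\{x_n\}_n \subseteq X$, and observe that for each $y \in X \setminus \{x_n\}_n$ only finitely many $n$ can have $x_n \in A_y$ (since $A_y$ is finite), forcing $y \in B_{x_n}$ for all but finitely many $n$. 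Then $X \setminus \{x_n\}_n = \bigcup_{N < \omega} \bigcap_{n \geq N} B_{x_n}$ is a countable union of finite sets, so $|X| \leq \aleph_0$.

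For the inductive step forward direction, assume the claim for $n-1$, let $|X| \leq \aleph_n$, and take $X = \omega_n$. Since $|\beta| \leq \aleph_{n-1}$ for every $\beta < \omega_n$, the inductive hypothesis supplies a covering $D_1^\beta, \ldots, D_{n+1}^\beta$ of $\beta^{n+1}$ with the thin property. Given $(x_1, \ldots, x_{n+2}) \in X^{n+2}$, let $M = \max_i x_i$ and $k$ be the least index with $x_k = M$; the subtuple obtained by deleting coordinate $k$ lies in $(M+1)^{n+1}$ and hence in some $D_j^{M+1}$, and we assign the original tuple to $D_i$ with $i = j$ if $j < k$ and $i = j+1$ if $j \geq k$, so that $i \in \{1, \ldots, n+2\} \setminus \{k\}$. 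Thinness in direction $i$ splits into two cases along an axis-parallel line: either $x_i$ is strictly greater than the other fixed coordinates, forcing $k = i$ and thus excluding the tuple from $D_i$, or $M$ and $k \neq i$ remain constant and membership in $D_i$ reduces to the thin property of $D_{j^*}^{M+1}$ in direction $j^* \in \{i, i-1\}$, yielding only finitely many admissible $x_i$.

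For the inductive step backward direction, suppose a covering of $X^{n+2}$ exists but $|X| \geq \aleph_{n+1}$. For each $\vec{x} \in X^{n+1}$ the set $E(\vec{x}) := \{y : (\vec{x}, y) \in D_{n+2}\}$ is finite by thinness of $D_{n+2}$ in its own direction. Pick any $Y \subseteq X$ with $|Y| = \aleph_n$; then $|\bigcup_{\vec{x} \in Y^{n+1}} E(\vec{x})| \leq \aleph_n \cdot \aleph_0 = \aleph_n < |X|$, so some $y^* \in X$ lies outside this union. Consequently $Y^{n+1}$ is covered by the $n+1$ restrictions $\{\vec{x} \in Y^{n+1} : (\vec{x}, y^*) \in D_i\}$ for $i \leq n+1$, each thin in direction $i$ as a subset of $Y^{n+1}$, and the inductive hypothesis applied to $Y$ gives $|Y| \leq \aleph_{n-1}$, contradicting $|Y| = \aleph_n$.

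The main obstacle is the bookkeeping in the forward inductive step, specifically the index-shift between $(n+1)$-subtuples and the $(n+2)$-tuples they come from, together with the careful case analysis at ties in the maximum needed to confirm thinness in each direction. The backward direction is by comparison quite clean once one spots that thinness of $D_{n+2}$ in its own direction furnishes a "free" slicing parameter $y^*$; this slicing trick should generalize without much fuss provided the inductive hypothesis is set up to quantify over arbitrary subsets of size $\aleph_n$.
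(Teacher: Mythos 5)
The paper does not actually prove this statement: it is quoted from Schmerl's article, which in turn attributes it to Kuratowski, and it is used purely as a black box in the proof of theorem \ref{thm:schmerlAdapt}. So there is no internal proof to compare against; what you have written is a self-contained reconstruction of the classical Kuratowski--Sierpi\'{n}ski decomposition theorem, and it follows the standard route correctly. The base case is Sierpi\'{n}ski's decomposition of $X^2$ along an enumeration of order type $\leq \omega$; the forward inductive step deletes the first maximal coordinate of each $(n+2)$-tuple and invokes the decomposition of $(M+1)^{n+1}$ supplied by the inductive hypothesis; and the backward step slices by a point $y^*$ avoiding the union $\bigcup_{\vec{x} \in Y^{n+1}} E(\vec{x})$, which has size at most $\aleph_n$, to drop down a dimension. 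Two small points deserve tightening. First, in the base case you should dispose of finite $X$ before fixing a countably infinite subset. Second, in the forward step your dichotomy along a line parallel to the $i$-th axis is not quite exhaustive: when $x_i$ equals the maximum $M'$ of the fixed coordinates and $i$ precedes every fixed index attaining $M'$, the least maximal index $k$ becomes $i$ rather than staying constant. This does not damage the argument, because in that subcase the tuple is excluded from $D_i$ by construction (no tuple is ever assigned to $D_k$), so it contributes nothing to $D_i \cap \ell$; but the case should be acknowledged rather than folded into ``$M$ and $k$ remain constant.'' With those repairs the proof is complete and correct.
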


Note that in this theorem, when we say a line $\ell$ parallel to the $i^{th}$ coordinate axis, we mean a set of the following form:
\begin{align*}
\ell =  \{ (a_1, ..., a_{i-1}, x, a_{i+1}, ..., a_{n+2}) \in X^{n+2} : x \in X \}
\end{align*}
for some $a_1, ..., a_{i-1}, a_{i+1}, ..., a_{n+2} \in X$. We are now ready to prove the result we want.

\textit{Proof of theorem \ref{thm:schmerlAdapt}.}
Let $C_1, ..., C_{n+2}$ be clouds which cover $\R^N$, centered at points $p_1, ..., p_{n+2}$, respectively. We can assume without loss of generality that $p_i \neq 0$ for each $1 \leq i \leq n+2$, since a translation still results in a covering of $\R^N$ by clouds.

The key of this proof is noting the similarity between the characteristics of our clouds and the sets $D_i$ from Kuratowski's theorem. The only difference is that $C_i \cap \ell$ is finite for lines $\ell$ through $p_i$, whereas $D_i \cap \ell$ is finite for lines $\ell$ parallel to the $i^{th}$ coordinate axis. The idea, then, is to apply some transformation to turn lines through $p_i$ into parallel lines; this is done by moving the points $p_i$ to points at infinity in projective space!

However, we will first need to move our clouds into $\R^{n+2}$, and define some maps, before we can perform this transformation. First, let $T : \R^{n+2} \rightarrow \R^N$ be the linear map such that $T(e_i) = p_i$ for each $1 \leq i \leq n+2$. Next, let $E : \R^{n+2} \rightarrow P_{n+2}(\R)$ be the embedding described in lemma \ref{lem:embedding}, and let $\infty_i$ denote the point at infinity on the $i^{th}$ coordinate axis (again, as in lemma \ref{lem:embedding}). Finally, by lemma \ref{lem:collineation}, there exists a homeomorphic collineation $S: P_{n+2}(\R) \rightarrow P_{n+2}(\R)$ such that $S(E(0)) = E(0)$ and $S(E(e_i)) = \infty_i$ for each $i$.

Now, the need to first move from $\R^N$ to $\R^{n+2}$ means we will need to be careful about where Kuratowski's theorem may hold. So, let us assume that there is some interval $X = (-\epsilon, \epsilon) \subset \R$ with the following two properties (see figure \ref{fig:functions} for illustration):
\begin{enumerate}
\item \label{cond:Q}
$S^{-1}(E(X^{n+2})) \subset E(\R^{n+2})$
\item \label{cond:injective}
If $N = E^{-1}(S^{-1}(E(X^{n+2})))$, then $T$ is injective on lines which pass through some $e_i$ and some $x \in N$.
\end{enumerate}

\begin{wrapfigure}[37]{R}{5cm}
\begin{center}
\includegraphics[width=4.25cm]{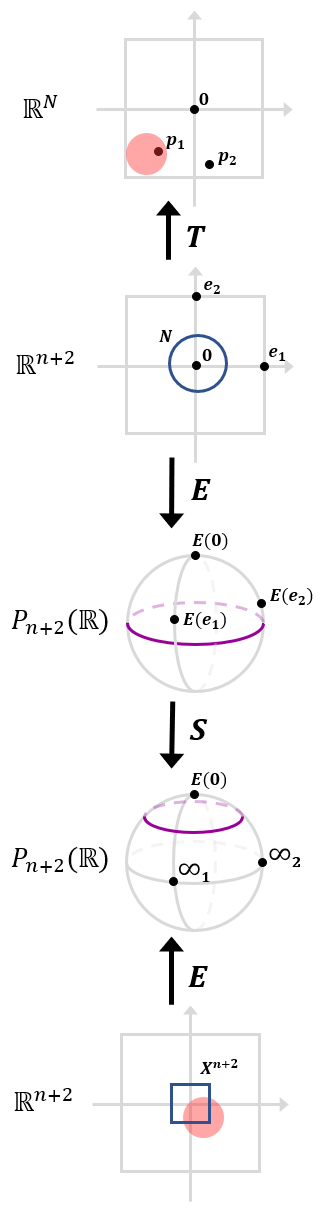}
\end{center}
\caption{Illustration of the mappings $T$, $E$, and $S$ (see the footnote).}
\label{fig:functions}
\end{wrapfigure}

Then we can finally transform our clouds by letting $D_i = X^{n+2} \cap E^{-1}(S(E(T^{-1}(C_i)))$ (again, see figure \ref{fig:functions}). We claim in fact that these $X$ and $D_i$ satisfy the conditions of Kuratowski's theorem.
\blfootnote{In figure \ref{fig:functions}, we take $n=0$, $N=2$ for illustration, even if we assume $n\geq 1$. The sphere represents $P_2(\R)$, since it is a sphere where antipodal points are identified.}
\blfootnote{\textit{In red}: a cloud around $p_1$ in $\R^N$ is transformed via the maps $T$, $E$, $S$, and $E$ again. We move to $P_{n+2}(\R)$, move $E(e_i)$ to infinity, and go back.
}
\blfootnote{\textit{In blue}: the set $X^{n+2}$, when moved back through projective space, is the set $N$; we require $T$ to be injective on lines through $N$ and $e_i$.
}
\blfootnote{\textit{In purple}: the equator of the sphere represents the points of $P_{n+2}(\R)$ not in the image of $E$; these are moved by $S$, but leave an open set around $E(0)$. We require $E(X^{n+2})$ to not intersect the purple set, so that it lands in the image of $E$ after taking $S^{-1}$.
}

We check this claim. Let $1 \leq i \leq n+2$ and let $\ell \subset X^{n+2}$ be a line parallel to the $i^{th}$ coordinate axis; we claim $D_i \cap \ell$ is finite. First, remark that since $E$ is injective and $S$ is bijective, we have:
\begin{align*}
|D_i \cap \ell| = |E(D_i \cap \ell)| = |S^{-1}(E(D_i \cap \ell))|
\end{align*}
Then, since the image of $E$ contains $S^{-1}(E(X^{n+2}))$ by assumption \ref{cond:Q}, we have:
\begin{align*}
|S^{-1}(E(D_i \cap \ell))|
\leq
|E^{-1}(S^{-1}(E(D_i \cap \ell)))|
\end{align*}
Combining this with the fact that $E$, $S^{-1}$, $E^{-1}$ are injective functions on their domains yields:
\begin{align*}
|D_i \cap \ell| \leq | E^{-1}(S^{-1}(E(D_i))) \cap E^{-1}(S^{-1}(E(\ell)))|
\end{align*}

Now, since $E$ and $S$ are collineations, we have that the set $E^{-1}(S^{-1}(E(\ell)))$ is contained within a line $L \subset \R^{n+2}$. Note that $L$ passes through the set $N$ described in assumption \ref{cond:injective}, since $\ell \subset X^{n+2}$. Moreover, since $\ell$ is parallel to the $i^{th}$ coordinate axis, $E(\ell)$ is contained in a line passing through $\infty_i$. Thus the line containing $S^{-1}(E(\ell))$ passes through $E(e_i)$, so $L$ passes through $e_i$. (Note: this is the transformation of parallel lines into lines through points!) By assumption \ref{cond:injective}, we obtain that $T$ is injective on $L$, and so we find that:
\begin{align*}
|D_i \cap \ell| \leq |E^{-1}(S^{-1}(E(D_i))) \cap L| 
\\= |T(E^{-1}(S^{-1}(E(D_i)))) \cap T(L)| 
\\ \leq |C_i \cap T(L)|
\end{align*}
Since $L$ is a line through $e_i$ and $T$ is injective on $L$, $T(L)$ is a line through $p_i$. We conclude that the last expression is finite, since $C_i$ is a cloud around $p_i$. Thus $|D_i \cap \ell|$ is finite, so the condition of Kuratowski's theorem is satisfied.

Since we have found sets $X$ and $D_i$ such that the condition of Kuratowski's theorem is satisfied, we conclude that $\aleph_n \geq |X| = |(-\epsilon, \epsilon)| = 2^{\aleph_0}$; this is what we wanted to show. All we need to do now is to show the existence of an interval $X$ which satisfies the assumptions we gave.

First, since $T$ is continuous and $T(e_i) = p_i \neq 0$ for each $i$, we know that there is some $\delta > 0$ such that $0 \notin T(B(e_i, \delta))$ for each $1 \leq i \leq n+2$. We claim that $T$ is injective on lines $L \subset \R^{n+2}$ which pass through some $e_i$ and some $x \in B(0,\delta)$. Indeed, we can write any such $L$ as $L = \{ \alpha(e_i - x) + e_i : \alpha \in \R\}$. Note that $e_i-x \in B(e_i, \delta)$, so $T(e_i-x) \neq 0$. Therefore, we have the following implication for any $\alpha_1, \alpha_2 \in \R$.
\begin{align*}
& 
T(\alpha_1 (e_i-x) + e_i) = T(\alpha_2 (e_i-x) + e_i)
\\ \Rightarrow \qquad &
\alpha_1 T(e_i-x) + T(e_i) = \alpha_2 T(e_i-x) + T(e_i)
\\ \Rightarrow \qquad & 
\alpha_1 = \alpha_2
\end{align*}
So, $T$ is indeed injective on $L$.

Next, let $Q = P_{n+2}(\R) \backslash E(\R^{n+2})$ (see the purple band in figure \ref{fig:functions}). Since $E$ is open, $Q$ is closed; since $S$ is a homeomorphism, $S(Q)$ is closed. Then since $S(E(0)) = E(0)$, there exists some open set $R_1 \subset P_{n+2}(\R)$ such that $E(0) \in R_1$ and $R_1 \cap S(Q) = \varnothing$.

We now let $R_2 = R_1 \cap S(E(B(0,\delta)))$. Since $S$ and $E$ are open, $R_2$ is open; since $S(E(0)) = E(0)$, we have $E(0) \in R_2$. Finally, we let $R_3 = E^{-1}(R_2) \subset \R^{n+2}$. Since $E$ is continuous, $R_3$ is open; since $E(0) \in R_2$, we get $0 \in R_3$. Thus we can pick $X$ to be an interval $(-\epsilon, \epsilon)$ such that $X^{n+2} \subset R_3$. We claim that $X$ has the desired properties.

First, since $X^{n+2} \subset R_3$, we have $E(X^{n+2}) \subset R_2 \subset R_1$, so $E(X^{n+2}) \cap S(Q) = \varnothing$. By bijectivity of $S$, we get $S^{-1}(E(X^{n+2})) \cap Q = \varnothing$, so $S^{-1}(E(X^{n+2})) \subset E(\R^{n+2})$, as desired.

Next, we similarly have $E(X^{n+2}) \subset R_2 \subset S(E(B(0,\delta)))$, so injectivity of $S$ and $E$ tells us that $E^{-1}(S^{-1}(E(X^{n+2}))) \subset B(0,\delta)$. From what we showed above, we conclude that $T$ is injective on lines through some $e_i$ and $E^{-1}(S^{-1}(E(X^{n+2})))$. Thus we have shown that there exists an $X$ which satisfies all the desired properties, and so this ends the proof.
\hspace*{\fill} \qed

\end{document}